\newtheorem{theorem}{Theorem}
\newtheorem*{theorem*}{Theorem}
\newtheorem{proposition}{Proposition}
\theoremstyle{remark}
\newtheorem{remark}{Remark}
\newtheorem{example}{Example}
\newcommand{\T}{\mathbb{T}}
\newcommand{\G}{{\bf{G}}}
\newcommand{\abs}[1]{\left\lvert#1\right\rvert}
\newcommand{\norm}[1]{\lvert\lvert#1\rvert\rvert}
\newcommand{\hardy}{\mathcal{H}}
\newcommand{\hol}{\mathcal{O}}
\newcommand{\disc}{\mathbb{D}}
\newcommand{\C}{\mathbb{C}}
\newcommand{\disk}{\mathbb{D}}
\newcommand{\Smu}{\mathbf{S}_{\mu}}
\title[$L^p$ Regularity of Weighted Szeg\"o Projections]{$L^p$ Regularity of Weighted Szeg\"o Projections on the Unit Disc}
\author[S. Munasinghe]{Samangi Munasinghe}
\author[Y. E. Zeytuncu]{Yunus E. Zeytuncu}
\address{Department of Mathematics, Western Kentucky University, Bowling Green, Kentucky, 42101}
\address{Department of Mathematics and Statistics, University of Michigan-Dearborn, Dearborn, MI 48128}
\email{samangi.munasinghe@wku.edu}
\email{zeytuncu@umich.edu}
\subjclass[2000]{Primary  32A25. Secondary  32A35, 42B30}
\keywords{Szeg\"o projection, $A_p$ weights, Weighted Hardy spaces, Dual spaces}
\begin{document}
\begin{abstract}
We present a family of weights on the unit disc for which the corresponding weighted Szeg\"o projection operators are irregular on $L^p$ spaces.
We further investigate the dual spaces of weighted Hardy spaces corresponding to this family.
\end{abstract}

\maketitle 

\section{Introduction}

\subsection{Classical setting} Let $\disc$ denote the unit disc in $\C$ and $\mathbb{T}$ the unit circle. Let $\mathcal{O}(\mathbb{D})$ denote the set of holomorphic functions on $\mathbb{D}$. For $1\leq p<\infty$, the ordinary Hardy space is defined as

\begin{displaymath}
\hardy^p(\mathbb{T}) = \left\{  f \in \hol(\disk) ~\text{and}~ \norm f_{\hardy^p} < \infty  \right\},
\end{displaymath}
where 
\begin{displaymath}
\norm f_{\hardy^p}^{p} =  \underset{0 \leq r < 1}{\text{sup}} \int_{0}^{2\pi} \abs{f(re^{i\theta})}^p d\theta.
\end{displaymath}

It is known that (\cite{DurenHp70}) functions in $\hardy^p(\mathbb{T})$ have boundary limits almost everywhere, i.e., for almost every $\theta \in [0,2\pi]$
\begin{displaymath}
{f}(e^{i\theta}) = \underset{r \rightarrow 1^{-}}{\text{lim~}} f(re^{i\theta}) 
\end{displaymath}
exists.
Moreover, 
\begin{displaymath}
\norm {{f}}_{L^p(\mathbb{T})} =  \norm f_{\hardy^p(\mathbb{T})}  
\end{displaymath}
where $L^p(\mathbb{T})$ is defined using the standard Lebesgue measure (denoted by $d\theta$) on the unit circle.  
It is also known that $\hardy^p(\mathbb{T})$ is a closed subspace of $L^p(\mathbb{T})$. In particular, for $p=2$, the orthogonal projection operator, called the Szeg\"o projection operator exists; 
\begin{displaymath}
\mathbf{S}: L^2(\T) \longrightarrow \hardy^2(\T).
\end{displaymath}
The operator $\mathbf{S}$ is an integral operator with the kernel $S(z, w)$ (called the Szeg\"o kernel) and for $f\in L^2(\mathbb{T})$,
$$\mathbf{S}f(z)=\int_{\mathbb{T}}S(z,w)f(w)d\theta.$$

It follows from the general theory of reproducing kernels that for any orthonormal basis $\{e_n(z)\}_{n = 0}^{\infty}$ for $\hardy^2(\mathbb{T})$, the Szeg\"o kernel is given by 
\begin{displaymath}
S(z,w)=\sum_{n=0}^{\infty}e_n(z)\overline{e_n(w)}.
\end{displaymath}

\subsection{Weighted setting} 
Let $g(z)$ be a holomorphic function on $\mathbb{D}$ that is continuous on $\overline{\mathbb{D}}$ and has no zeros inside $\mathbb{D}$. We set $\mu(z)=|g(z)|^2$ and define weighted Hardy spaces and weighted Szeg\"o projections using the function $\mu(z)$ as a weight on $\T$.

For $1 \leq p < \infty$, we define the weighted Lebesgue and Hardy spaces with respect to $\mu$ as;
\begin{displaymath}
L^p(\T, \mu) = \lbrace  f \text{ measurable function on $\mathbb{D}$ and }  \norm{f}_{p,\mu} < \infty \rbrace
\end{displaymath}
where
\begin{eqnarray*}
\norm{f}_{p, \mu}^p = \int_{\T} \abs{f(w)}^p \mu(w) d\theta
= \int_{\T} \abs{f(w) (g(w))^{2/p}}^p d\theta
\end{eqnarray*}
and
\begin{displaymath}
\hardy^p(\T, \mu) = \lbrace f \in \mathcal{O}(\disk) \text{ such that } \norm{f}_{\hardy^p, \mu} < \infty\rbrace
\end{displaymath}
where
\begin{displaymath}
\norm{f}_{\hardy^p, \mu}^p = \underset{0 \leq r < 1}{\text{sup}}~~ \int_0^{2\pi} \abs{f(re^{i\theta}) (g(re^{i\theta}))^{2/p}}^p d\theta.
\end{displaymath}

Note that, $\displaystyle{f \in \hardy^p(\T, \mu)}$ implies $\displaystyle{f(z) (g(z))^{2/p} \in \hardy^p(\T)}$ and which in turn gives that  $\displaystyle{f(z) (g(z))^{2/p}}$ has almost everywhere boundary limits.  Hence so does  $\displaystyle{f(z)}$.  Additionally, $\displaystyle{\norm{f}_{\hardy^p, \mu} = \norm{f}_{p, \mu}}$. Furthermore, $\displaystyle{L^p(\T, \mu)}$  is a Banach space and $\displaystyle{\hardy^p(\T, \mu)}$ is a closed subspace of  $\displaystyle{L^p(\T, \mu)}$.

In particular, again when $p = 2$, we obtain the weighted Szeg\"o projection
\begin{displaymath}
\mathbf{S_{\mu}}: L^2(\T, \mu) \longrightarrow \hardy^2(\T, \mu).
\end{displaymath}
Following the similar theory, we note that $\mathbf{S_{\mu}}$ is an integral operator 
\begin{displaymath}
\mathbf{S_{\mu}}f(z) = \int_{\T} S_{\mu}(z, w) f(w) \mu(w) d\theta.
\end{displaymath}
If ~~$\{f_n(z)\}_{n = 0}^{\infty}$ is an orthonormal basis for $\hardy^2(\T, \mu)$ then 
\begin{displaymath}
S_{\mu}(z,w)=\sum_{n=0}^{\infty}f^{}_n(z)\overline{f^{}_n(w)}.
\end{displaymath}

We are interested in the action of $\Smu$ on $L^p(\mathbb{T},\mu)$. By definition, $\Smu$ is a bounded operator from $L^2(\mathbb{T},\mu)$ to $L^2(\mathbb{T},\mu)$. The problem we investigate is the boundedness of $\Smu$ from $L^p(\mathbb{T},\mu)$ to $L^p(\mathbb{T},\mu)$ for other values of $p\in(1,\infty)$. Note that for any given weight $\mu$ as above, we can associate an interval $I_{\mu}\subset(1,\infty)$ such that $\Smu$ is bounded from $L^p(\mathbb{T},\mu)$ to $L^p(\mathbb{T},\mu)$ if and only if $p\in I_{\mu}$. By definition, $2\in I_{\mu}$ and by duality and interpolation, $I_{\mu}$ is a conjugate symmetric interval around 2. Namely, if some $p_0>2$ is in $I_{\mu}$, so is $q_0$ where $\frac{1}{q_0}+\frac{1}{p_0}=1$. 

In the classical setting, i.e., $\mu\equiv 1$, the Szeg\"o projection operator is bounded from $L^p(\mathbb{T})$ to $L^p(\mathbb{T})$ for any $1<p<\infty$, see \cite[page 257]{ZhuBook}.

The purpose of this note is to construct weights $\mu$ on $\mathbb{T}$ for which the corresponding interval $I_{\mu}$ can be any open interval larger than $\{2 \}$ but smaller than $(1, \infty)$.

\begin{theorem}\label{S_0bounded}
For any given $p_0>2$, there exists a weight $\mu$ on $\mathbb{T}$ such that $I_{\mu}=(q_0,p_0)$ where $\frac{1}{q_0}+\frac{1}{p_0}=1$, i.e., the weighted Szeg\"o projection $\Smu$ is bounded on $L^p(\mathbb{T},\mu)$ if and only if $q_0<p<p_0$.
\end{theorem}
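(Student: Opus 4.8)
The plan is to represent $\Smu$ as a unitary conjugate of the ordinary Szeg\"o projection $\mathbf{S}$, transfer the $L^p(\T,\mu)$-boundedness of $\Smu$ into a weighted norm inequality for $\mathbf{S}$ on the circle, read off the answer from the $A_p$ theory, and finally tune an explicit power weight.

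\emph{Step 1: unitary reduction.} Let $M_g$ be multiplication by $g$. Since $\mu=|g|^2$, the identity $\int_{\T}|f|^2|g|^2\,d\theta=\int_{\T}|gf|^2\,d\theta$ shows $M_g$ is an isometry of $L^2(\T,\mu)$ into $L^2(\T)$; it is onto since $h/g\in L^2(\T,\mu)$ for every $h\in L^2(\T)$ (the boundary zero set of $g$ is null). Because $g$ has no zeros in $\disc$, $f\mapsto gf$ is a bijection of $\hol(\disc)$ onto itself, and since $\norm{gf}_{\hardy^2}=\norm{f}_{\hardy^2,\mu}$ it restricts to a unitary of $\hardy^2(\T,\mu)$ onto $\hardy^2(\T)$. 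As a unitary carrying the pair $\bigl(L^2(\T,\mu),\hardy^2(\T,\mu)\bigr)$ to $\bigl(L^2(\T),\hardy^2(\T)\bigr)$ intertwines the two orthogonal projections, one gets
\[
\Smu=M_g^{-1}\,\mathbf{S}\,M_g,\qquad\text{i.e.}\qquad \Smu f=\frac1g\,\mathbf{S}(gf)\quad(f\in L^2(\T,\mu)).
\]

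\emph{Step 2: transfer to $L^p$ and invoke $A_p$.} For $1<p<\infty$, the computation $\int_{\T}|f|^p|g|^2\,d\theta=\int_{\T}|gf|^p|g|^{2-p}\,d\theta$ shows $M_g$ is an isometric isomorphism of $L^p(\T,\mu)$ onto $L^p(\T,|g|^{2-p}d\theta)$. Feeding the formula for $\Smu$ into this, for $f$ in the dense subspace $L^2(\T,\mu)\cap L^p(\T,\mu)$ one finds
\[
\norm{\Smu f}_{p,\mu}^p=\int_{\T}\abs{\mathbf{S}(gf)}^p|g|^{2-p}\,d\theta,\qquad \norm{f}_{p,\mu}^p=\int_{\T}\abs{gf}^p|g|^{2-p}\,d\theta,
\]
so $\Smu$ is bounded on $L^p(\T,\mu)$ iff $\mathbf{S}$ is bounded on $L^p(\T,|g|^{2-p}d\theta)$. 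On $\T$ the Szeg\"o projection is the Riesz projection onto the nonnegative Fourier modes, and by the Hunt--Muckenhoupt--Wheeden theorem it is bounded on $L^p(\T,w\,d\theta)$ exactly when $w\in A_p(\T)$. Hence $I_{\mu}=\{\,p\in(1,\infty):|g|^{2-p}\in A_p(\T)\,\}$.

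\emph{Step 3: choice of weight.} Given $p_0>2$, set $a=\tfrac1{p_0-2}>0$, $g(z)=(1-z)^a$ (principal branch), and $\mu(z)=|g(z)|^2=|1-z|^{2/(p_0-2)}$. Since $1-z$ lies in the open right half-plane for $z\in\disc$, $g$ is holomorphic and zero-free on $\disc$ and continuous on $\overline{\disc}$, vanishing on $\T$ only at $z=1$. On the circle $|g(e^{i\theta})|^{2-p}=|1-e^{i\theta}|^{a(2-p)}$, and $|1-e^{i\theta}|^{\gamma}$ is an $A_p(\T)$ weight precisely when $-1<\gamma<p-1$ (a standard power-weight fact, since $|1-e^{i\theta}|\asymp|\theta|$ near its only zero). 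Thus $|g|^{2-p}\in A_p(\T)\iff -1<a(2-p)<p-1$: for $p\geq2$ the right inequality is automatic and the left one reads $p<2+\tfrac1a=p_0$, while for $p\leq2$ the left inequality is automatic and the right one reads $p>\tfrac{2a+1}{a+1}$, and one checks $\tfrac{2a+1}{a+1}=\tfrac{p_0}{p_0-1}=q_0$. Therefore $I_{\mu}=(q_0,p_0)$, which also confirms the interval claim in this instance.

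\emph{Main obstacle.} The substance is concentrated in Steps 1--2: the point is that conjugating by $M_g$ turns $\Smu$ into $g^{-1}\mathbf{S}(g\,\cdot\,)$ while simultaneously turning the weight $\mu$ into the $p$-dependent weight $|g|^{2-p}$, converting a question about $\Smu$ into a textbook $A_p$ question. The places needing care are: that $M_g$ really maps \emph{onto} the correct Hardy and Lebesgue spaces even though $g$ may vanish on $\T$ (so $1/g$ is unbounded there); the density argument passing the operator identity from $L^2(\T,\mu)$ to $L^p(\T,\mu)$; and using the \emph{necessity} half of the $A_p$ characterization, which is what forces $I_{\mu}$ to \emph{equal} $(q_0,p_0)$ rather than merely contain it. Step 3 is then a one-parameter computation with power weights.
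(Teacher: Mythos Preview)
Your proposal is correct and follows essentially the same three-step route as the paper: relate $\Smu$ to $\mathbf{S}$ via multiplication by $g$, reduce $L^p(\T,\mu)$-boundedness of $\Smu$ to the condition $|g|^{2-p}\in A_p(\T)$, and then choose a power weight $g(z)=(1-z)^{a}$ and compute the $A_p$ range. The only cosmetic differences are that the paper obtains the operator identity from the kernel formula $S(z,w)=g(z)S_\mu(z,w)\overline{g(w)}$ via orthonormal bases rather than by your unitary-intertwining argument, and it verifies the power-weight $A_p$ range by a direct computation rather than quoting it as standard.
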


We prove this theorem similar to the proof of the analogous statement for weighted Bergman projections in \cite{Zeytuncu13} with modifications from Bergman kernels to Szeg\"o kernels. The main ingredient is the theory of $A_p$ weights on $\mathbb{T}$.

When the weighted Szeg\"o projection $\mathbf{S}_{\mu}$ is bounded on $\displaystyle{}$ $\displaystyle{L^p(\T, \mu)}$ for some $p$, 
one can identify the dual space of the weighted Hardy space $\displaystyle{\hardy^p\left( \T, \mu \right)}$.  However, when $\displaystyle{\mathbf{S}_{\mu}}$ fails to be bounded, a different approach is needed to identify the dual spaces.  In the third section, we address this issue and describe the dual spaces of weighted Hardy spaces.   

The following notations are used in the rest of the note. We denote by $\displaystyle{f(z) \simeq g(z)}$ when  $\displaystyle{c \cdot g(z) \leq f(z) \leq C \cdot g(z)}$ for some positive constants $c$ and $C$ which are independent of $z$.  Similarly we denote by  $\displaystyle{f(z) \lesssim g(z)}$ when  $\displaystyle{f(z) \leq C\cdot g(z)}$ for some positive constant $C$. We use $d\theta$ for the Lebegue measure on the unit circle $\mathbb{T}$. When we integrate functions (that are also defined on the unit disc) on $\mathbb{T}$, instead of writing $e^{i\theta}$, we keep $z$ and $w$ as the variables.

\section{Proof of Theorem \ref{S_0bounded}}

\subsection{Relation between weighted kernels} The particular choice of $\mu_{}(z)$ indicates the following relation between the weighted Szeg\"o kernels ${S_{\mu}}(z,w)$ and the ordinary Szeg\"o kernel ${S}(z,w)$.

\begin{proposition}\label{WeightedNonweighted}
For $\mu(z)=|g(z)|^2$ as above, the following relation holds
\begin{equation} \label{3.5}
S\left(z, w \right) = g_{}(z)  S_{\mu}\left( z, w \right) \overline{g_{}(w)}.
\end{equation} 
\end{proposition}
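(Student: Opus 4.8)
The plan is to exploit the fact that multiplication by $g$ gives an isometry between the two weighted Hardy spaces, and then transport orthonormal bases through this isometry and use the reproducing-kernel series formula stated in the introduction. Concretely, since $g$ is holomorphic on $\disc$, continuous on $\Dc$, and zero-free on $\disc$, the map $f \mapsto gf$ sends $\hol(\disc)$ into $\hol(\disc)$; and by the very definition of the norms (with $p=2$), $\norm{f}_{\hardy^2,\mu}^2 = \sup_r \int_0^{2\pi} |f(re^{i\theta}) g(re^{i\theta})|^2\, d\theta = \norm{gf}_{\hardy^2}^2$. So $f \mapsto gf$ is an isometric embedding $\hardy^2(\T,\mu) \hookrightarrow \hardy^2(\T)$. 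I would first check it is onto: if $h \in \hardy^2(\T)$, then $h/g$ is holomorphic on $\disc$ (zero-free denominator) and lies in $\hardy^2(\T,\mu)$ with the matching norm, so $h = g\cdot(h/g)$ is in the image. Hence $f \mapsto gf$ is a surjective isometry $\hardy^2(\T,\mu) \to \hardy^2(\T)$.

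Next, take any orthonormal basis $\{f_n\}_{n\ge 0}$ of $\hardy^2(\T,\mu)$. I claim $\{g f_n\}_{n\ge 0}$ is an orthonormal basis of $\hardy^2(\T)$: it is a complete orthonormal set because $f \mapsto gf$ is a surjective isometry (a unitary between the two Hilbert spaces — note the inner product on $\hardy^2(\T,\mu)$ is $\langle f_1,f_2\rangle_\mu = \int_\T f_1 \overline{f_2}\,\mu\, d\theta = \int_\T (gf_1)\overline{(gf_2)}\, d\theta$, which is exactly the pullback of the unweighted inner product). Now apply the series formula for reproducing kernels from the introduction on both sides:
\begin{displaymath}
S(z,w) = \sum_{n=0}^\infty (g f_n)(z)\,\overline{(g f_n)(w)} = g(z)\left(\sum_{n=0}^\infty f_n(z)\,\overline{f_n(w)}\right)\overline{g(w)} = g(z)\, S_\mu(z,w)\, \overline{g(w)},
\end{displaymath}
which is exactly \eqref{3.5}.

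The main point requiring care — and the only genuine obstacle — is justifying that the abstract reproducing-kernel series formula applies to $\hardy^2(\T,\mu)$, i.e., that point evaluation at $z \in \disc$ is a bounded functional on $\hardy^2(\T,\mu)$ so that a reproducing kernel exists and is given by $\sum f_n(z)\overline{f_n(w)}$ for any orthonormal basis. This follows because point evaluation is bounded on $\hardy^2(\T)$ (classical) and $f \mapsto gf$ is an isometry with $g(z)$ a fixed nonzero scalar at each interior point, so $|f(z)| = |g(z)|^{-1}|(gf)(z)| \lesssim_z \norm{gf}_{\hardy^2} = \norm{f}_{\hardy^2,\mu}$. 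I would state this, note that the series converges uniformly on compact subsets of $\disc\times\disc$ (again transported from the unweighted case), and then the identity above is legitimate termwise. A secondary, essentially bookkeeping, point is that one should either argue the identity is independent of the chosen basis (it is, since $S$ and $S_\mu$ are each basis-independent) or simply fix one convenient basis; either way there is nothing deep left.
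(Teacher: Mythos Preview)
Your proof is correct and follows essentially the same approach as the paper: both arguments establish that multiplication by $g$ is a unitary between $\hardy^2(\T,\mu)$ and $\hardy^2(\T)$, transport an orthonormal basis through it, and read off the kernel identity from the series representation. The only cosmetic difference is direction---the paper starts from a basis $\{e_n\}$ of $\hardy^2(\T)$ and divides by $g$, while you start from a basis $\{f_n\}$ of $\hardy^2(\T,\mu)$ and multiply by $g$; your added remarks on boundedness of point evaluations are extra care the paper leaves implicit.
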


\begin{proof}
Let $\displaystyle{\{e_n(z)\}_{n=0}^{\infty}}$ be an orthonormal basis for $\hardy^2\left( \mathbb{T}\right)$. Since $g_{}(z)$ does not vanish inside $\mathbb{D}$, each $\frac{e_n(z)}{g_{}(z)}$ is a holomorphic function on $\disc$ and is in $\hardy^2(\mathbb{T},\abs{g_{}}^2)$ by construction.  Following the orthonormal properties of $\displaystyle{e_n(z)}$'s we have
\begin{displaymath}
\left<\frac{e_n(z)}{g(z)_{}},\frac{e_m(z)}{g(z)_{}}\right>_{\mu}=\left<e_n(z),e_m(z)\right> = \delta_{n,m},
\end{displaymath}
where $\displaystyle{}$ $\displaystyle{\delta_{n, m}}$ is the Kronecker delta.

Also for any $f$ in $\hardy^2(\mathbb{T},\abs{g_{}}^2)$, $(f\cdot g)_{}$ is in $\hardy^2(\mathbb{T})$ and hence can be written as a linear combination of the $e_n(z)$'s.  Consequently so can $f$ using $\frac{e_n(z)}{g(z)}$'s.  Hence,$\displaystyle{\left\{{e_n}(z)/{g(z)_{}}\right\}}_{n=0}^{\infty}$ is an orthonormal basis for $\hardy^2(\mathbb{T},\abs{g_{}}^2)$.

Therefore, using the basis representation of the Szeg\"o kernels we obtain
\begin{align*}
S(z,w)&=\sum_{n=0}^{\infty}e_n(z)\overline{e_n(w)}=g_{}(z)\left(\sum_{n=0}^{\infty}\frac{e_n(z)}{g_{}(z)}\overline{\frac{e_n(w)}{g_{}(w)}}\right)\overline{g_{}(w)}\\
&=g_{}(z)S_{\mu}(z,w)\overline{g_{}(w)}.
\end{align*}
\end{proof}

\subsection{$A_p$ weights on $\T$}

For $p \in (1, \infty)$, a weight $\mu$ on $\T$ is said to be in $A_p(\T)$ if

\begin{displaymath}
\underset{I \subset \T}{\underset{I}{\text{sup}}} \left(\frac{1}{\abs{I}}\int_{I} \mu(\theta) d\theta\right) \left( \frac{1}{\abs{I}}  \int_{I} \mu(\theta)^{\frac{-1}{p - 1}} d\theta\right)^{p-1} < \infty,
\end{displaymath}
where $I$ denotes intervals in $\T$.

These weights are used to characterize the $L^p$ regularity of the ordinary Szeg\"o projection on weighted spaces.  The following result appears in 
\cite{Garnett}  and is used in \cite[2.3]{LanzaniStein04} in connection with a conformal map based approach to the investigation of the unweighted Szeg\"o projection for a general domain.


\begin{theorem}\label{Ap}
The ordinary Szeg\"o projection $\mathbf{S}$ is bounded from $\displaystyle{L^p(\T, \mu)}$ to $\displaystyle{L^p(\T, \mu)}$ if and only if $\mu \in A_p(\T)$.
\end{theorem}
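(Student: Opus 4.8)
The plan is to realize the Szeg\"o projection as the Riesz (analytic) projection on the circle and then invoke the classical Hunt--Muckenhoupt--Wheeden theorem on weighted norm inequalities for the conjugate function. First I would fix the orthonormal basis $\{e^{in\theta}/\sqrt{2\pi}\}_{n\geq 0}$ of $\hardy^2(\T)$ inside $L^2(\T)$ and check, on trigonometric polynomials $f=\sum_{n\in\Z}a_n e^{in\theta}$, that $\mathbf{S}f=\sum_{n\geq 0}a_n e^{in\theta}$; that is, $\mathbf{S}=P_+$, the Riesz projection. Writing $P_+=\tfrac12(I+iH)+\tfrac12 A_0$, where $H$ is the conjugate function operator on $\T$ (so $H(e^{in\theta})=-i\,\mathrm{sgn}(n)e^{in\theta}$) and $A_0f=\tfrac1{2\pi}\int_\T f\,d\theta$ is the rank-one averaging operator, reduces the boundedness of $\mathbf{S}$ on $L^p(\T,\mu)$ to that of $H$: the operator $A_0$ is bounded on $L^p(\T,\mu)$ for every $p\in(1,\infty)$ and every weight with $\mu,\mu^{-1/(p-1)}\in L^1(\T)$ (in particular for $\mu=|g|^2$ as in our setting), since boundedness of $A_0$ amounts to the single $A_p$-type inequality on the interval $I=\T$, which is automatically implied by the $A_p$ supremum condition, and is in any case harmless.

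Second I would invoke the Hunt--Muckenhoupt--Wheeden theorem: $H$ is bounded on $L^p(\T,\mu)$ if and only if $\mu\in A_p(\T)$. For orientation one can indicate both halves. Necessity is the soft half: testing the weighted inequality for $H$ (equivalently $P_+$) against $f=\mathbf{1}_I$ on a subinterval, using that $|Hf|\gtrsim 1$ on a fixed portion of $\T$ away from $I$, and pairing with the dual inequality, forces the product $\big(\tfrac1{|I|}\int_I\mu\big)\big(\tfrac1{|I|}\int_I\mu^{-1/(p-1)}\big)^{p-1}$ to be uniformly bounded over all intervals $I$. Sufficiency is the substantial half: one derives the weighted weak-$(1,1)$ and $L^p$ estimates for $H$ from $\mu\in A_p$, either by the Calder\'on--Zygmund decomposition adapted to the $A_p$ weight, or, in the streamlined modern route, by a good-$\lambda$ inequality comparing $H$ with the Hardy--Littlewood maximal operator $M$ (which is bounded on $L^p(\T,\mu)$ precisely when $\mu\in A_p$, via the reverse H\"older inequality and the maximal theorem) together with the self-improving/extrapolation property of the $A_p$ classes.

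The main obstacle is exactly this sufficiency direction of Hunt--Muckenhoupt--Wheeden, namely controlling the singular conjugate function by the maximal operator in the presence of an $A_p$ weight; everything else --- the identification $\mathbf{S}=P_+$, the reduction to $H$, and the necessity of $A_p$ --- is routine. Since the statement is classical and available in the cited literature (e.g.\ \cite{Garnett}), in the paper we will simply cite Theorem~\ref{Ap} rather than reproduce its proof, using the sketch above only to locate where the real content lies.
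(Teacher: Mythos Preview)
Your proposal is correct and is essentially the same as the paper's proof: both reduce the statement to the classical weighted theory for the Cauchy/Hilbert transform on $\T$ and cite it (the paper via the identification of the Szeg\"o kernel with the Cauchy kernel and a reference to \cite{Garnett}, you via the identification $\mathbf{S}=P_+=\tfrac12(I+iH)+\tfrac12 A_0$ and a reference to Hunt--Muckenhoupt--Wheeden). Your version is simply a more detailed unpacking of the same citation.
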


\begin{proof}

This result is an immediate consequence of the fact that the Szeg\"o kernel of the unit disc agrees with the Cauchy kernel (see \cite{KerzmanStein}) together with the classical weighted theory for the latter, see also \cite{Garnett}.

\end{proof}

The following theorem follows from Proposition \ref{3.5} and Theorem \ref{Ap}.

\begin{proposition}\label{Main1}
For $1 < p < \infty$ and $\mu(z)=|g(z)|^2$ as above the following are equivalent.
\begin{enumerate}
\item $\mathbf{S_{\mu}}$ is bounded from $\displaystyle{L^p(\T, \abs{g_{}}^2)}$ to    $\displaystyle{}$     $\displaystyle{L^p(\T, \abs{g_{}}^2)}$.
\item $\mathbf{S}$ is bounded from $\displaystyle{L^p(\T, \abs{g_{}}^{2 - p})}$ to    $\displaystyle{}$     $\displaystyle{L^p(\T, \abs{g_{}}^{2 - p})}$.
\item $\abs{g_{}}^{2 - p} \in A_p(\mathbb{T})$. 
\end{enumerate}
\end{proposition}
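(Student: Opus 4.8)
The plan is to reduce everything to Proposition~\ref{WeightedNonweighted} and Theorem~\ref{Ap}. The equivalence $(2)\Leftrightarrow(3)$ is literally Theorem~\ref{Ap} applied to the weight $\abs{g_{}}^{2-p}$ in place of $\mu$, so the only real content is $(1)\Leftrightarrow(2)$, which I would obtain by turning the kernel identity \eqref{3.5} into an intertwining relation between $\Smu$ and $\mathbf{S}$.

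First I would rewrite \eqref{3.5}. Since $\mu(w)=g(w)\overline{g(w)}$, identity \eqref{3.5} gives $S_{\mu}(z,w)\,\mu(w)=S(z,w)\,g(w)/g(z)$, hence for $f$ in (a dense subspace of) $L^p(\T,\mu)$,
\begin{displaymath}
\Smu f(z)=\int_{\T}S_{\mu}(z,w)f(w)\mu(w)\,d\theta=\frac{1}{g(z)}\int_{\T}S(z,w)\,\bigl(f(w)g(w)\bigr)\,d\theta=\frac{1}{g(z)}\,\mathbf{S}(fg)(z),
\end{displaymath}
i.e. $g\cdot\Smu f=\mathbf{S}(fg)$. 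Division by $g$ causes no trouble: $g$ is holomorphic and $\not\equiv 0$, so its boundary values are nonzero a.e. on $\T$, and $fg\in\hardy^2(\T)$ precisely when $f\in\hardy^2(\T,\mu)$, which is exactly the basis computation carried out in the proof of Proposition~\ref{WeightedNonweighted}. On the other hand I would record the two pointwise identities
\begin{displaymath}
\abs{f(w)}^p\mu(w)=\abs{f(w)g(w)}^p\,\abs{g(w)}^{2-p},
\end{displaymath}
\begin{displaymath}
\abs{g(w)\,\Smu f(w)}^p\,\abs{g(w)}^{2-p}=\abs{\mathbf{S}(fg)(w)}^p\,\abs{g(w)}^{2-p},
\end{displaymath}
which together say that the multiplication map $M_g\colon f\mapsto fg$ is an isometric isomorphism of $L^p(\T,\abs{g_{}}^2)$ onto $L^p(\T,\abs{g_{}}^{2-p})$ (with inverse $h\mapsto h/g$), and that under $M_g$ the operator $\Smu$ is conjugated into $\mathbf{S}$, that is $M_g\circ\Smu=\mathbf{S}\circ M_g$.

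From these two facts $(1)\Leftrightarrow(2)$ is immediate: boundedness of an operator is invariant under conjugation by an isometric isomorphism, and in fact $\Smu$ on $L^p(\T,\abs{g_{}}^2)$ and $\mathbf{S}$ on $L^p(\T,\abs{g_{}}^{2-p})$ then have equal operator norms. Combining this with $(2)\Leftrightarrow(3)$ from Theorem~\ref{Ap} completes the proof. I do not expect a genuine obstacle; the only place that deserves care is the bookkeeping around possible boundary zeros of $g$ — one must check that $M_g$ really is a bijection at the level of $L^p$-classes, that the extension of $\mathbf{S}$ from $L^2$ to $L^p$ is compatible with the integral formula used above, and that pulling the factor $1/g(z)$ out of the integral is legitimate — but all of this is routine since $g$ and $1/g$ are measurable and finite a.e. on $\T$, and it is already implicit in the statement of Proposition~\ref{WeightedNonweighted}.
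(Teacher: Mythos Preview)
Your argument is correct and is essentially the paper's own proof: both derive the intertwining identity $g\cdot\Smu f=\mathbf{S}(fg)$ from Proposition~\ref{WeightedNonweighted}, and both use the weight identity $\abs{f}^{p}\abs{g}^{2}=\abs{fg}^{p}\abs{g}^{2-p}$ to transfer $L^p$-boundedness, with $(2)\Leftrightarrow(3)$ read off from Theorem~\ref{Ap}. The only difference is packaging: you phrase the norm computation as conjugation by the isometric isomorphism $M_g$, while the paper writes out the two chains of equalities $\norm{\Smu f}_{p,\abs{g}^2}^p=\norm{\mathbf{S}(fg)}_{p,\abs{g}^{2-p}}^p\lesssim\norm{fg}_{p,\abs{g}^{2-p}}^p=\norm{f}_{p,\abs{g}^2}^p$ and its counterpart explicitly.
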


\begin{proof}
Theorem \ref{Ap} gives the equivalence of (2) and (3).  We show the equivalence of (1) and (2).  Using the relation between the kernels from the previous proposition, we obtain the following relation between the corresponding operators:
\begin{displaymath}
g_{}(z) \left({\bf{S}}_{\mu} f\right)(z) = \left({\bf S}(f\cdot g_{})\right)(z) \qquad \text{for} ~~ f \in L^2(\T, \abs{g_{}}^2).
\end{displaymath}

Indeed, suppose (2) is true.  Then
\begin{eqnarray*}
\norm{{\bf S}_{\mu} f}_{p, \abs{g_{}}^2}^p &=& \int_{\mathbb{T}} \abs{({\bf S}_{\mu} f)(w)}^p \abs{g_{}(w)}^2d\theta \\
&=& \int_{\mathbb{T}}\abs{({\bf S}(f\cdot g_{}))(w)}^p \abs{g_{}(w)}^{2-p}d\theta = \norm{{\bf S}(f\cdot g_{})}^p_{p, \abs{g_{}}^{2 - p}}\\
&\lesssim& \norm{f\cdot g}^p_{p, \abs{g_{}}^{2 - p}} = \norm{f}^p_{p, \abs{g_{}}^2}
\end{eqnarray*}
which proves (1).

Now when (1) is true,
\begin{eqnarray*}
\norm{{\bf S}_{} f}_{p, \abs{g_{}}^{2 - p}}^p &=& \int_{\mathbb{T}} \abs{({\bf S}_{} f)(w)}^p \abs{g_{}(w)}^{2 - p} d\theta\\
&=& \int_{\mathbb{T}}\abs{({\bf S}_{\mu}(f/g_{}))(w)}^p \abs{g_{}(w)}^2d\theta = \norm{{\bf S}_{\mu}(f/g_{})}^p_{p, \abs{g_{}}^2}\\
&\lesssim& \norm{f/g_{}}^p_{p, \abs{g_{}}^2} = \norm{f}^p_{p, \abs{g_{}}^{2 - p}}
\end{eqnarray*}
and hence (2) is true.  

\end{proof}

We can now present a family of weights that behave as claimed in Theorem \ref{S_0bounded}.

\begin{theorem}\label{Main2}
For $\alpha \geq 0 $, let
$g_{\alpha}(z) = (z - 1)^{\alpha}$ and $\mu_{\alpha}(z) =\abs{g_{\alpha}(z)}^2$. Then the weighted Szeg\"o projection operator $\mathbf{S}_{\mu_{\alpha}}$ is bounded on $L^p(\mathbb{T},\mu_{\alpha})$ if and only if $p \in \left( \frac{2\alpha + 1}{\alpha + 1}, \frac{2\alpha + 1}{\alpha} \right)$.
\end{theorem}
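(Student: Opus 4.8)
The plan is to reduce the assertion, via Proposition \ref{Main1}, to deciding for which $p$ a concrete power weight lies in the Muckenhoupt class $A_p(\T)$, and then to settle that by the classical power-weight computation. First one checks that $g_{\alpha}$ meets the standing hypotheses: $z\mapsto z-1$ carries $\disc$ into the simply connected region $\{\abs{w+1}<1\}$, which omits the origin, so a holomorphic branch of the $\alpha$-th power exists there, making $g_{\alpha}\in\hol(\disc)$ zero-free on $\disc$ and continuous on $\overline{\disc}$ with $\mu_{\alpha}(z)=\abs{z-1}^{2\alpha}$ on $\T$. Proposition \ref{Main1} then says $\mathbf{S}_{\mu_{\alpha}}$ is bounded on $L^p(\T,\mu_{\alpha})$ if and only if $\abs{g_{\alpha}}^{2-p}=\abs{z-1}^{\alpha(2-p)}\in A_p(\T)$.

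Next I would record and use the power-weight criterion: for $\beta\in\R$, $\abs{z-1}^{\beta}\in A_p(\T)$ precisely when $-1<\beta<p-1$. Away from $z=1$ the weight is bounded above and below by positive constants, so it contributes nothing to the $A_p$ supremum over intervals staying away from $z=1$; the decisive intervals are those clustering at $z=1$, where $\abs{e^{i\theta}-1}\simeq\abs{\theta}$. The problem thereby collapses to the classical one on $\R$: for an interval $I$ near the origin one has $\frac{1}{\abs{I}}\int_I\abs{\theta}^{\beta}\,d\theta\simeq\abs{I}^{\beta}$ exactly when $\beta>-1$, and $\bigl(\frac{1}{\abs{I}}\int_I\abs{\theta}^{-\beta/(p-1)}\,d\theta\bigr)^{p-1}\simeq\abs{I}^{-\beta}$ exactly when $\beta<p-1$; when both hold the product of the two averages is uniformly $\simeq 1$, and when either fails the supremum is $+\infty$. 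The one place calling for care — and it is the only real content beyond the two cited inputs — is the bookkeeping for intervals that contain or merely abut $z=1$ in an asymmetric fashion; this is entirely standard (one compares with a symmetric interval of comparable length), so it is not a genuine obstacle.

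Finally I would apply the criterion with $\beta=\alpha(2-p)$: membership $\abs{z-1}^{\alpha(2-p)}\in A_p(\T)$ is the pair of inequalities $-1<\alpha(2-p)$ and $\alpha(2-p)<p-1$. For $\alpha>0$ the first rearranges to $\alpha p<2\alpha+1$, i.e.\ $p<\frac{2\alpha+1}{\alpha}$, and the second to $2\alpha+1<p(\alpha+1)$, i.e.\ $p>\frac{2\alpha+1}{\alpha+1}$; together they give $p\in\left(\frac{2\alpha+1}{\alpha+1},\frac{2\alpha+1}{\alpha}\right)$. For $\alpha=0$ the weight is $\equiv1$, both inequalities hold for all $p\in(1,\infty)$, and the stated interval $\left(\frac{1}{1},\frac{1}{0}\right)$ is to be read as $(1,\infty)$, recovering the classical result. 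One checks in passing that taking $\alpha=\frac{1}{p_0-2}$ sends the right endpoint to $p_0$ and the left endpoint to $\frac{p_0}{p_0-1}=q_0$, which is exactly the input needed to deduce Theorem \ref{S_0bounded}.
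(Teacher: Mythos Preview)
Your proposal is correct and follows essentially the same approach as the paper: both reduce via Proposition \ref{Main1} to the $A_p(\T)$ membership of $\abs{z-1}^{\alpha(2-p)}$, then verify the power-weight criterion by treating intervals away from $z=1$ as trivial and handling intervals near $z=1$ via $\abs{e^{i\theta}-1}\simeq\abs{\theta}$. The only difference is organizational---you abstract the criterion $-1<\beta<p-1$ first and then substitute $\beta=\alpha(2-p)$, whereas the paper carries $\alpha(2-p)$ through the computation directly---and you add the explicit check that $g_{\alpha}$ satisfies the standing hypotheses, which the paper leaves implicit.
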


\begin{remark}
Theorem \ref{Main2} is a quantitative version of Theorem \ref{S_0bounded} and therefore we  obtain a proof of Theorem \ref{S_0bounded} when we prove Theorem \ref{Main2}.
\end{remark}

\begin{remark}
Note that as $\alpha\to 0^+$ the interval $\left( \frac{2\alpha + 1}{\alpha + 1}, \frac{2\alpha + 1}{\alpha} \right)$ approaches to $(1,\infty)$ and as $\alpha\to\infty$ the interval $\left( \frac{2\alpha + 1}{\alpha + 1}, \frac{2\alpha + 1}{\alpha} \right)$ approaches to $\{2\}$. Hence, any conjugate symmetric interval around $2$ can be achieved as the boundedness range of a weighted Szeg\"o projection.
\end{remark}

\begin{proof}
First note that on intervals $I$ with  $\theta = 0 \notin I$, the weight $\abs{g_{\alpha}(z)}^{2 - p}= \abs{z - 1}^{\alpha(2 - p)} \simeq C$.  Therefore, both integrals in  the $A_p(\T)$ condition are finite and hence so is the supremum over all such intervals when $p$ is in the given range.  On intervals that contain $z = 0$ we have the following. 

\subsubsection*{Step 1}
We show that for the weights $\omega(z) = \abs{g_{\alpha}(z)}^{2 - p} = \abs{z - 1}^{\alpha(2 - p)}$, the second integral in the $A_p(\T)$ condition diverges for arcs $I = (-\epsilon, \epsilon)$ if and only if  $p$ is outside the given region.  

For intervals $I = (-\epsilon, \epsilon)$ with small  $\epsilon$ and $p \leq  \frac{2\alpha + 1}{\alpha + 1}$, 
\begin{eqnarray*}
 \int_I \omega(z)^{\frac{1}{1 - p}} d\theta &=& \int_{-\epsilon}^{\epsilon} \abs{e^{i\theta} - 1}^{\frac{\alpha (2 - p)}{(1 - p)}} d\theta \\
 &=& \int_{-\epsilon}^{\epsilon} \left( \sqrt{2(1 - \cos(\theta))} \right)^{\frac{\alpha (2 - p)}{(1 - p)}} d\theta \\
 &\simeq&   \int_{-\epsilon}^{\epsilon}  \left( \theta \right)^{\frac{\alpha (2 - p)}{(1 - p)}} d\theta ~=~ \infty \\
\end{eqnarray*}
because \qquad $\displaystyle{\frac{\alpha(2 - p)}{(1 - p)} \leq -1}$.  
Hence $\omega \not \in A_p(\T)$ for such $p$.

Also  when $p \geq \frac{2\alpha + 1}{\alpha}$, 
\begin{eqnarray*}
 \int_I \omega(z) d\theta 
 &\simeq&   \int_{-\epsilon}^{\epsilon}  \left( \theta \right)^{\alpha (2 - p)} d\theta ~=~ \infty\\
\end{eqnarray*}
because $\displaystyle{\alpha(2 - p) \leq -1}$.
Hence $\omega \not \in A_p(\T)$ for  $p \geq \frac{2\alpha + 1}{\alpha}$ either.

The same calculations show convergence of all integrals for  $p$ in the desired range.

\subsubsection*{Step 2}

We show that for $p\in (\frac{2\alpha + 1}{\alpha + 1},\frac{2\alpha + 1}{\alpha})$ and any (general) interval $I = (\theta_0 - R, \theta_0 + R)$ with $\theta_0 \neq 0$ the integrals in the $A_p$ condition is finite.  We consider two cases.

\subsubsection*{Case 1}

$\displaystyle{I \cap \text{Arc}(0, 2R) = \emptyset}$.

On such intervals $I$, 
$\displaystyle{3R < \theta_0}$ ~and so~ $\displaystyle{2\theta_0/3  \leq \theta_0 - R \leq\theta \leq \theta_0 + R \leq 4\theta_0/3}$ ~giving ~ $\theta \simeq \theta_0$.  So, $\displaystyle{\omega = \abs{z - 1}^{\alpha(2 - p)} \simeq \theta_{0}^{\alpha(2 - p)}}$.

Therefore,

\begin{eqnarray*}
\frac{1}{\abs{I}} \int_I \omega(z) d\theta 
 &\lesssim&  \frac{1}{2R} \int_I   \theta_0^{\alpha(2 - p)} ~d\theta ~=~  \theta_0^{\alpha(2 - p)}. \\
\end{eqnarray*}
and 
\begin{eqnarray*}
\left( \frac{1}{\abs{I}} \int_I \left( \omega(z) \right)^{\frac{1}{1 - p}} d\theta \right)^{p - 1} 
 &\lesssim& \left(  \frac{1}{2R} \int_I \theta_0^{\frac{\alpha(2 - p)}{1 - p}} ~d\theta \right)^{p - 1}\\
 &=& \theta_0^{-\alpha(2 - p)}
\end{eqnarray*}

Hence the supremum over all such intervals is finite.

\subsubsection*{Case 2}

$\displaystyle{I \cap \text{Arc}(0, 2R) \neq \emptyset}$.

In this case,  since $\displaystyle{I \subset \text{Arc}(0, 4R)}$ and $\displaystyle{\alpha(2 - p) + 1 > 0}$ ~when~ $\displaystyle{\frac{2\alpha + 1}{\alpha} > p}$ we have,
 \begin{eqnarray*}
\frac{1}{\abs{I}} \int_I \omega(z) d\theta 
 &\simeq&  \frac{1}{8R}~ 2 \int_{0}^{4R}   \theta^{\alpha(2 - p)} ~d\theta ~=~  \frac{1}{4R}~ \frac{\theta^{\alpha(2 - p) + 1}}{[\alpha(2 - p) + 1]}\Big|_{0}^{4R}  = \frac{4R^{\alpha(2 - p)}}{\alpha(2 - p) + 1}.\\
\end{eqnarray*}

Also since   $\displaystyle{\frac{\alpha(2 - p)}{1 - p} + 1 > 0}$ ~~ when ~~ $\displaystyle{\frac{2\alpha + 1}{\alpha + 1} < p}$,
 \begin{eqnarray*}
\left( \frac{1}{\abs{I}} \int_I \omega(z)^{\frac{1}{1 - p}} d\theta \right)^{p - 1} 
 &\simeq& \left( \frac{1}{8R}~ 2 \int_{0}^{4R}   \theta^{\frac{\alpha(2 - p)}{1 - p}} ~d\theta \right)^{p - 1} ~=~  \left( \frac{1}{4R}~ \frac{\theta^{\frac{\alpha(2 - p)}{1 - p} + 1} }{\left[\frac{\alpha(2 - p)}{1 - p} + 1\right]} \right)^{p - 1}    \\
 &\simeq& \left(  \frac{R^{\frac{\alpha(2 - p)}{1 - p} } }{\left[\frac{\alpha(2 - p)}{1 - p} + 1\right]} \right)^{p - 1}    
 = \frac{2R^{- \alpha(2 - p)}}{\left[\frac{\alpha(2 - p)}{1 - p} + 1\right]^{p - 1}}.\\
\end{eqnarray*}

Therefore, the supremum over all intervals of the type in case two are also finite and  $\omega=|g|^{2 - p}\in A^p(\mathbb{T})$ if and only if $p\in (\frac{2\alpha + 1}{\alpha + 1},\frac{2\alpha + 1}{\alpha})$.

\end{proof}

\begin{remark}
The analog of Theorem \ref{S_0bounded} for domains in $\mathbb{C}^n$ ($n\geq 2$) is an open problem. See \cite{BekolleSzego} for a partial result. Also see \cite{LanzaniStein13} for the regularity on strongly pseudoconvex domains.
\end{remark}


\section{Duality}

In this section, we investigate the duals of Hardy spaces corresponding to weights from the previous section.
For $\alpha \geq 0$ and $\displaystyle{\mu_{\alpha}(z) = \abs{z - 1}^{2\alpha}}$, a consequence of Theorem \ref{Main2} is the following.

\begin{theorem}\label{FixedWeightDualHardy}
Let $\alpha \geq 0$ and $\displaystyle{\mu_{\alpha}(z) = \abs{z - 1}^{2\alpha}}$.  For any $\displaystyle{p \in \left( \frac{2\alpha + 1}{\alpha + 1}, \frac{2\alpha + 1}{\alpha}  \right)}$, the dual space of the weighted Hardy space $\displaystyle{}$ $\displaystyle{\hardy^p\left( \T, \abs{z - 1}^{2\alpha}   \right)}$ can be identified by $\displaystyle{\hardy^q\left( \T, \abs{z - 1}^{2\alpha}   \right)}$ where $\displaystyle{\frac{1}{p} + \frac{1}{q} = 1}$ and under the pairing 
\begin{displaymath}
\left< f, h  \right> = \int_{\T} f(z) \overline{h(z)} \abs{z - 1}^{2\alpha} d\theta.
\end{displaymath}
\end{theorem}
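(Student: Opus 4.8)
The plan is to run the standard duality scheme for a complemented subspace of an $L^p$ space. Throughout this sketch write $L^r=L^r(\T,\mu_\alpha)$ and $\hardy^r=\hardy^r(\T,\mu_\alpha)$, and let $q$ be conjugate to $p$; the case $p=q=2$ is the elementary Hilbert-space statement, so assume $p\neq 2$. The one nontrivial ingredient is that $\mathbf{S}_{\mu_\alpha}$ is bounded on \emph{both} $L^p$ and $L^q$. This follows from Theorem~\ref{Main2} (or Proposition~\ref{Main1}) once one observes that the interval $\left(\frac{2\alpha+1}{\alpha+1},\frac{2\alpha+1}{\alpha}\right)$ is symmetric under $p\mapsto p/(p-1)$: indeed $p=\frac{2\alpha+1}{\alpha}$ forces $q=\frac{2\alpha+1}{\alpha+1}$, so $p$ belongs to the interval iff $q$ does.

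Step~1: show that $\mathbf{S}_{\mu_\alpha}$ extends to a bounded projection of $L^r$ onto $\hardy^r$ for $r\in\{p,q\}$, and that $\langle\mathbf{S}_{\mu_\alpha}f,h\rangle_{\mu_\alpha}=\langle f,\mathbf{S}_{\mu_\alpha}h\rangle_{\mu_\alpha}$ for $f\in L^p$, $h\in L^q$ (the latter by density from self-adjointness on $L^2$, where $\langle\phi,\psi\rangle_{\mu_\alpha}=\int_\T\phi\bar\psi\,\mu_\alpha\,d\theta$). For the projection claim, multiplication by $(z-1)^{2\alpha/r}$ is an isometric isomorphism $\hardy^r\to\hardy^r(\T)$, so the functions $P(z)(z-1)^{-2\alpha/r}$ with $P$ a polynomial are dense in $\hardy^r$; since $\frac{2\alpha+1}{\alpha+1}>\frac{4\alpha}{2\alpha+1}$ for all $\alpha\ge 0$, these functions also lie in $\hardy^2\cap L^r$, so $\hardy^2\cap\hardy^r$ is dense in $\hardy^r$. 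As $\mathbf{S}_{\mu_\alpha}$ fixes $\hardy^2$, is $L^r$-bounded, and $\hardy^r$ is closed in $L^r$, it follows that $\mathbf{S}_{\mu_\alpha}|_{\hardy^r}=\mathrm{id}$; and using that an $L^r$-limit of $\hardy^2$-functions which itself lies in $L^r$ must lie in $\hardy^r$ (a short Smirnov-class / $N^+$ verification --- this is the mildly delicate routine point), one gets $\mathbf{S}_{\mu_\alpha}(L^r)\subseteq\hardy^r$.

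Step~2: identify the annihilator $(\hardy^p)^\perp:=\{h\in L^q:\langle f,h\rangle_{\mu_\alpha}=0\ \text{for all}\ f\in\hardy^p\}$ with $\ker\bigl(\mathbf{S}_{\mu_\alpha}|_{L^q}\bigr)$. If $\mathbf{S}_{\mu_\alpha}h=0$, then for $f\in\hardy^p$ (so $\mathbf{S}_{\mu_\alpha}f=f$) we get $\langle f,h\rangle_{\mu_\alpha}=\langle\mathbf{S}_{\mu_\alpha}f,h\rangle_{\mu_\alpha}=\langle f,\mathbf{S}_{\mu_\alpha}h\rangle_{\mu_\alpha}=0$, so $h\in(\hardy^p)^\perp$. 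Conversely, if $h\in(\hardy^p)^\perp$, then for every $\psi\in L^p$ we have $\mathbf{S}_{\mu_\alpha}\psi\in\hardy^p$, hence $\langle\psi,\mathbf{S}_{\mu_\alpha}h\rangle_{\mu_\alpha}=\langle\mathbf{S}_{\mu_\alpha}\psi,h\rangle_{\mu_\alpha}=0$; since this holds for all $\psi\in L^p$ and $(L^p)^*=L^q$ under this pairing, $\mathbf{S}_{\mu_\alpha}h=0$. Thus $\mathbf{S}_{\mu_\alpha}\colon L^q\to\hardy^q$ is a bounded surjection (it is the identity on $\hardy^q$) with kernel $(\hardy^p)^\perp$, so the open mapping theorem gives a Banach-space isomorphism $L^q/(\hardy^p)^\perp\cong\hardy^q$.

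Step~3: conclude and exhibit the pairing. By Hahn--Banach and $(L^p)^*=L^q$, restriction gives a surjection $L^q\to(\hardy^p)^*$ with kernel $(\hardy^p)^\perp$, whence $(\hardy^p)^*\cong L^q/(\hardy^p)^\perp\cong\hardy^q$ with comparable norms. Concretely, given $\Lambda\in(\hardy^p)^*$ extend it to some $\tilde h\in L^q$; then $h:=\mathbf{S}_{\mu_\alpha}\tilde h\in\hardy^q$ and $\tilde h-h\in\ker\mathbf{S}_{\mu_\alpha}=(\hardy^p)^\perp$, so $\Lambda(f)=\langle f,\tilde h\rangle_{\mu_\alpha}=\langle f,h\rangle_{\mu_\alpha}=\int_\T f(z)\overline{h(z)}\,|z-1|^{2\alpha}\,d\theta$ for all $f\in\hardy^p$; and if $h\in\hardy^q$ induces the zero functional, then $h\in\hardy^q\cap(\hardy^p)^\perp$, so $h=\mathbf{S}_{\mu_\alpha}h=0$, giving injectivity. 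The only real obstacle is the hypothesis $p\in\left(\frac{2\alpha+1}{\alpha+1},\frac{2\alpha+1}{\alpha}\right)$: this scheme relies entirely on having a bounded Szeg\"o projection on both $L^p$ and $L^q$, and for $p$ outside this interval no such projection exists and the above duality genuinely fails --- which is precisely why the admissible range of $p$ in the statement coincides with the boundedness range from Theorem~\ref{Main2}.
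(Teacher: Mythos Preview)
Your proposal is correct and follows essentially the same route as the paper: extend a functional on $\hardy^p$ to $L^p$ by Hahn--Banach, represent it by some $h\in L^q$ via $(L^p)^*=L^q$, and then replace $h$ by $\mathbf{S}_{\mu_\alpha}h\in\hardy^q$ using self-adjointness of $\mathbf{S}_{\mu_\alpha}$ and its boundedness on $L^q$ (Theorem~\ref{Main2}). Your write-up is more explicit than the paper's on several points the paper takes for granted---you verify the conjugate symmetry of the interval, check that $\mathbf{S}_{\mu_\alpha}$ really is a projection of $L^r$ onto $\hardy^r$, identify the annihilator with $\ker\mathbf{S}_{\mu_\alpha}$, and address injectivity---but the underlying argument is the standard complemented-subspace duality scheme in both cases.
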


\begin{proof}
This is a standard argument; however, we present a proof here for completeness.  
For a given function $\displaystyle{}$ $\displaystyle{h \in \hardy^q\left( \T, \abs{z - 1}^{2\alpha}  \right)}$, we define a linear functional on $\displaystyle{\hardy^p\left( \T, \abs{z - 1}^{2\alpha}  \right)}$ by 
\begin{displaymath}
{\bf{G}}(f) = \int_{\T} f(z) \overline{h(z)} \abs{z - 1}^{2\alpha} d\theta. 
\end{displaymath}
It is clear that by the H\"older's inequality, $\G$ is a bounded functional with the operator norm less than $\displaystyle{\norm{h}_{\hardy^q\left( \T, \abs{z - 1}^{2\alpha} \right)}}$.

For the opposite direction, let $\G$ be a bounded linear functional on $\displaystyle{\hardy^p\left( \T, \abs{z - 1}^{2\alpha} \right)}$.  By the Hahn-Banach theorem, $\G$ extends to a bounded linear functional on $\displaystyle{L^p\left( \T, \abs{z - 1}^{2\alpha} \right)}$. 
Now using the duality of $L^p$ spaces, we find a function $\displaystyle{h \in L^q\left( \T, \abs{z - 1}^{2\alpha} \right)}$ such that
\begin{displaymath}
\G(f) = \int_{\T} f(z) \overline{h(z)} \abs{z - 1}^{2\alpha} dz \qquad \text{for} \qquad f \in L^p\left( \T, \abs{z - 1}^{2\alpha} \right).
\end{displaymath}
When we restrict $\G$ onto $\displaystyle{L^p\left( \T, \abs{z - 1}^{2\alpha}  \right) \cap \hardy^2\left( \T, \abs{z - 1}^{2\alpha}  \right)}$
and use self-adjointness of $\displaystyle{{\bf{S}}_{\mu_{\alpha}}}$ we get the following. 
\begin{eqnarray*}
\G(f) &=& \int_{\T} f(z)~\overline{h(z)} \abs{z - 1}^{2\alpha} d\theta\\
&=& \int_{\T} ({\bf{S}}_{\mu_{\alpha}}f)(z) ~\overline{h(z)} \abs{z - 1}^{2\alpha}d\theta\\
&=& \int_{\T} f(z) ~\overline{({\bf{S}}_{\mu_{\alpha}}h)(z)} \abs{z - 1}^{2\alpha} d\theta
\end{eqnarray*}
 for $\displaystyle{f \in L^p\left( \T, \abs{z - 1}^{2\alpha}  \right) \cap \hardy^2\left( \T, \abs{z - 1}^{2\alpha}  \right)}$.
 
Since the intersection of these two spaces is dense in  $\displaystyle{}$   $\displaystyle{\hardy^p\left( \T, \abs{z - 1}^{2\alpha} \right)}$, we note that $\G$ is represented by the function  $\displaystyle{({\bf{S}}_{\mu_{\alpha}}h)(z)}$ and  $\displaystyle{{\bf{S}}_{\mu_{\alpha}}h \in \hardy^q\left( \T, \abs{z - 1}^{2\alpha} \right) }$ by Theorem \ref{Main2}.
\end{proof}

A natural question arises after this statement.  How can we identify the dual space of the weighted Hardy space, $\displaystyle{\hardy^p\left( \T, \abs{z - 1}^{2\alpha} \right)}$, when $\displaystyle{p \notin \left( \frac{2\alpha + 1}{\alpha + 1} , \frac{2\alpha + 1}{\alpha} \right)}$? 
The answer to this question follows from the following result on the boundedness of the weighted Szeg\"o projection, $\displaystyle{{\bf{S}}_{\mu_{\alpha}}}$. 
Similar results for weighted Bergman projections have been presented recently in \cite{ArroussiPau13} and \cite{ConstantinPelaez13}.

\begin{proposition}\label{ChangeScaleAndMeasure}
Let $\displaystyle{}$ $\displaystyle{\alpha \geq 0}$ and $\displaystyle{\mu_{\alpha} = \abs{z - 1}^{2\alpha}}$.  For any $\displaystyle{1 < p < \infty}$, the weighted Szeg\"o  projection $\displaystyle{{\bf{S}}_{\mu_{\alpha}}}$ is bounded on $\displaystyle{L^p\left( \T, \abs{z - 1}^{\alpha p} \right)}$.    
\end{proposition}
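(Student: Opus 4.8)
The plan is to deduce the proposition from the $L^p$-boundedness of the \emph{ordinary} Szeg\"o projection by exploiting the intertwining identity already isolated in Proposition \ref{WeightedNonweighted}. With $g_\alpha(z)=(z-1)^\alpha$ (a zero-free holomorphic function on $\mathbb{D}$, continuous on $\overline{\mathbb{D}}$, with $\abs{g_\alpha(z)}=\abs{z-1}^\alpha$), the proof of Proposition \ref{Main1} records that
\[
g_\alpha(z)\,(\mathbf{S}_{\mu_\alpha}f)(z)=\mathbf{S}(f\cdot g_\alpha)(z),\qquad\text{i.e.}\qquad \mathbf{S}_{\mu_\alpha}f=g_\alpha^{-1}\,\mathbf{S}(f\cdot g_\alpha),
\]
a priori for $f\in L^2(\T,\mu_\alpha)$. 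The key observation is that the exponent $\alpha p$ chosen for the target space is exactly the one that makes multiplication by $g_\alpha$ an isometry in this $L^p$ setting.

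Concretely, I would first note that $M_{g_\alpha}\colon f\mapsto f\,g_\alpha$ is an isometric isomorphism from $L^p(\T,\abs{z-1}^{\alpha p})$ onto $L^p(\T)$, since $\abs{g_\alpha(w)}^p=\abs{w-1}^{\alpha p}$ gives $\norm{fg_\alpha}_{L^p(\T)}^p=\int_{\T}\abs{f(w)}^p\abs{w-1}^{\alpha p}\,d\theta$, with inverse $h\mapsto h/g_\alpha$ landing back in the weighted space by the same identity. Conjugating the displayed relation by $M_{g_\alpha}$ turns it into $M_{g_\alpha}\mathbf{S}_{\mu_\alpha}M_{g_\alpha}^{-1}=\mathbf{S}$. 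Since the unweighted Szeg\"o projection $\mathbf{S}$ is bounded on $L^p(\T)$ for every $1<p<\infty$ (the case $\mu\equiv1\in A_p(\T)$ of Theorem \ref{Ap}, cf. \cite[page 257]{ZhuBook}), conjugating back yields, for $f$ in a suitable dense subspace,
\[
\norm{\mathbf{S}_{\mu_\alpha}f}_{L^p(\T,\abs{z-1}^{\alpha p})}^p=\int_{\T}\abs{g_\alpha(w)}^{-p}\abs{\mathbf{S}(fg_\alpha)(w)}^p\abs{w-1}^{\alpha p}\,d\theta=\norm{\mathbf{S}(fg_\alpha)}_{L^p(\T)}^p\lesssim\norm{fg_\alpha}_{L^p(\T)}^p=\norm{f}_{L^p(\T,\abs{z-1}^{\alpha p})}^p,
\]
using $\abs{g_\alpha(w)}^{-p}\abs{w-1}^{\alpha p}=1$ pointwise.

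The only subtlety — and the closest thing to an obstacle — is that $\mathbf{S}_{\mu_\alpha}$ is defined intrinsically only on $L^2(\T,\mu_\alpha)$, so one must first make sense of it on $L^p(\T,\abs{z-1}^{\alpha p})$. I would handle this exactly as in the proof of Proposition \ref{Main1}: the intersection $L^2(\T,\mu_\alpha)\cap L^p(\T,\abs{z-1}^{\alpha p})$ is carried by $M_{g_\alpha}$ onto $L^2(\T)\cap L^p(\T)$, hence is dense in $L^p(\T,\abs{z-1}^{\alpha p})$; on this subspace the identity and the inequality above hold, and $\mathbf{S}_{\mu_\alpha}$ then extends by continuity to all of $L^p(\T,\abs{z-1}^{\alpha p})$. (When $\alpha=0$ this is literally the classical statement that $\mathbf{S}$ is bounded on $L^p(\T)$.) The upshot, which is what the duality discussion in the rest of the section needs, is that replacing the weight $\mu_\alpha=\abs{z-1}^{2\alpha}$ by the rescaled weight $\abs{z-1}^{\alpha p}$ restores boundedness for the \emph{entire} range $1<p<\infty$, in contrast to the restricted range $I_{\mu_\alpha}$ of Theorem \ref{Main2}.
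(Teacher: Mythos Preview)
Your proof is correct and follows essentially the same route as the paper: both use the intertwining relation $g_\alpha\,\mathbf{S}_{\mu_\alpha}f=\mathbf{S}(fg_\alpha)$ from Proposition~\ref{WeightedNonweighted} together with the unweighted $L^p$-boundedness of $\mathbf{S}$, and both exploit that the weight $\abs{z-1}^{\alpha p}$ is precisely $\abs{g_\alpha}^p$ so that the factors cancel. Your conjugation-by-$M_{g_\alpha}$ formulation and explicit density remark are slightly more careful than the paper's direct kernel computation, but the argument is the same.
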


\begin{remark}
Note that as $p$ varies, changes occur not only the integrability scale but also in the measure.
\end{remark}

\begin{proof}
The proof follows from the relation between the kernels in Proposition \ref{WeightedNonweighted} and the fact that the unweighted Szeg\"o projection $\textbf{S}$ is bounded on $\displaystyle{L^p(\T)}$ for $1<p<\infty$.

Let us take $\displaystyle{f(z) \in L^p\left( \T, \abs{z - 1}^{\alpha p}  \right)}$ and set $$\widetilde{f}(z)=f(z)  \frac{\abs{z - 1}^{2\alpha}}{(z-1)^{\alpha}},$$ then we have $\widetilde{f} \in L^p(\T)$. Using this notation, we notice 
\begin{eqnarray*}
{\bf{S}}_{\mu_{\alpha}}f (z)&=& \int_{\T} S_{\mu_{\alpha}}(z, w) f(w)\abs{w - 1}^{2\alpha}~ d\theta \\
&=& \frac{(z - 1)^{\alpha}}{(z - 1)^{\alpha}} ~\int_{\T} S_{\mu_{\alpha}}(z, w)  (w-1)^{\alpha} f(w)\frac{\abs{w - 1}^{2\alpha}}{(w-1)^{\alpha}}  ~ d\theta\\
&=& \frac{1}{(z - 1)^{\alpha}} ~\int_{\T} S(z, w) \widetilde{f}(w) ~ d\theta\\
&=& \frac{1}{(z - 1)^{\alpha}} \textbf{S}\left(\widetilde{f}(w) \right)(z),
\end{eqnarray*}
where we invoke Proposition  \ref{WeightedNonweighted} when we pass from the second to the third line. Next by using the fact that the unweighted Szeg\"o projection operator $\textbf{S}$ is bounded on $\displaystyle{L^p(\T)}$, we obtain the following.
\begin{eqnarray*}
\norm{{\bf{S}}_{\mu_{\alpha}} f}_{L^p\left( \T, \abs{z - 1}^{\alpha p}  \right)} &=& \int_{\T} \abs{z - 1}^{\alpha p} \frac{1}{\abs{z - 1}^{\alpha p}} ~~ \left\vert 
\textbf{S}\left(\widetilde{f}(w) \right)(z)
 \right\vert^p d\theta\\
&=& \norm{
\textbf{S}\left(\widetilde{f}(w) \right)
}^p_{L^p(\T)}\lesssim\norm{
\widetilde{f}(w)
}^p_{L^p(\T)}\\ &=& \norm{ f(w) \frac{\abs{w - 1}^{2\alpha}}{(w-1)^{\alpha}}}^p_{L^p(\T)}\\
&=& \norm{f}^p_{L^p\left( \T, \abs{z - 1}^{\alpha p}  \right)}.
\end{eqnarray*}
This finishes the proof of the proposition.
\end{proof}

Now we can answer the duality question by using Proposition \ref{ChangeScaleAndMeasure}.  Following the same argument in the proof of Theorem \ref{FixedWeightDualHardy} we obtain the following statement.

\begin{theorem}\label{Main3}
Let $\displaystyle{}$  $\displaystyle{\alpha \geq 0}$ and $\displaystyle{\mu_{\alpha} = \abs{z - 1}^{2\alpha}}$.  Then for any $\displaystyle{p \in (1, \infty)}$, the dual space of the weighted Hardy space $\displaystyle{\hardy^p\left( \T, \abs{z - 1}^{\alpha p}  \right)}$ can be identified by $\displaystyle{\hardy^q\left( \T, \abs{z - 1}^{\alpha q} \right)}$ where $\displaystyle{\frac{1}{p} + \frac{1}{q} = 1}$, under the pairing 
\begin{displaymath}
\left<  f, h \right> = \int_{\T} f(z) \overline{h(z)} \abs{z - 1}^{2\alpha} ~ d\theta.
\end{displaymath}
\end{theorem}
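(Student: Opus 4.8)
The plan is to mimic the proof of Theorem~\ref{FixedWeightDualHardy}, replacing the use of Theorem~\ref{Main2} by Proposition~\ref{ChangeScaleAndMeasure}. First I would fix $p \in (1,\infty)$ and its conjugate exponent $q$, and observe the arithmetic identity $\frac{\alpha p}{p} = \alpha = \frac{\alpha q}{q}$ together with $\frac{\alpha p}{p'} + \frac{\alpha p}{p} \cdot \frac{q}{p}$-type bookkeeping that shows the pairing $\langle f,h\rangle = \int_{\T} f\overline{h}\,\abs{z-1}^{2\alpha}\,d\theta$ is the natural one: writing $\abs{z-1}^{2\alpha} = \abs{z-1}^{\alpha p / p'} \cdot \abs{z-1}^{\alpha}$ is awkward, so cleaner is to note $\abs{z-1}^{2\alpha} = \abs{z-1}^{\alpha} \cdot \abs{z-1}^{\alpha}$ and pair $f\,\abs{z-1}^{\alpha(p-1)\cdot\frac{1}{?}}$ — the correct check is simply that $\abs{f\overline{h}\abs{z-1}^{2\alpha}| \le |f|\abs{z-1}^{\alpha}\cdot|h|\abs{z-1}^{\alpha}$ and $\big(|f|\abs{z-1}^{\alpha}\big) \in L^p$, $\big(|h|\abs{z-1}^{\alpha}\big) \in L^q$ with the stated norms, so H\"older applies and gives boundedness of $\G(f) = \langle f, h\rangle$ with $\norm{\G} \le \norm{h}_{\hardy^q(\T,\abs{z-1}^{\alpha q})}$.

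For the reverse inclusion, let $\G$ be a bounded linear functional on $\hardy^p(\T,\abs{z-1}^{\alpha p})$. By Hahn--Banach extend it to $L^p(\T,\abs{z-1}^{\alpha p})$, and by the duality $\big(L^p(\T,\abs{z-1}^{\alpha p})\big)^* = L^q(\T,\abs{z-1}^{\alpha p})$ (via the unweighted pairing, or equivalently $L^q$ against the same measure) produce a representing function; after absorbing the weight one arranges $h \in L^q(\T,\abs{z-1}^{\alpha q})$ with $\G(f) = \int_{\T} f\overline{h}\,\abs{z-1}^{2\alpha}\,d\theta$ for all $f$ in the big space. One small care-point: the representation given by abstract $L^p$ duality uses the measure $\abs{z-1}^{\alpha p}\,d\theta$, so the representing function $\tilde h$ there satisfies $\int f \tilde h \,\abs{z-1}^{\alpha p}\,d\theta$; I then set $h$ so that $\overline{h}\abs{z-1}^{2\alpha} = \tilde h\,\abs{z-1}^{\alpha p}$, i.e. $\overline{h} = \tilde h\,\abs{z-1}^{\alpha p - 2\alpha} = \tilde h \,\abs{z-1}^{\alpha(p-2)}$, and check $h \in L^q(\T,\abs{z-1}^{\alpha q})$ by a direct exponent computation using $\tilde h \in L^q(\T,\abs{z-1}^{\alpha p})$ and $\frac{1}{p}+\frac{1}{q}=1$.

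Next, restrict $\G$ to the dense subspace $L^p(\T,\abs{z-1}^{\alpha p}) \cap \hardy^2(\T,\mu_\alpha)$ and use the self-adjointness of $\mathbf{S}_{\mu_\alpha}$ on $L^2(\T,\mu_\alpha)$, exactly as in Theorem~\ref{FixedWeightDualHardy}, to rewrite
\[
\G(f) = \int_{\T} f(z)\,\overline{(\mathbf{S}_{\mu_\alpha}h)(z)}\,\abs{z-1}^{2\alpha}\,d\theta
\]
for such $f$; here one must first know $h$ lies in a space on which $\mathbf{S}_{\mu_\alpha}$ makes sense and interacts correctly with the $\mu_\alpha$-pairing, which is why the weight in the pairing is $\abs{z-1}^{2\alpha}=\mu_\alpha$ rather than $\abs{z-1}^{\alpha q}$. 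Then Proposition~\ref{ChangeScaleAndMeasure} (applied with exponent $q$) gives $\mathbf{S}_{\mu_\alpha}h \in \hardy^q(\T,\abs{z-1}^{\alpha q})$ since $h \in L^q(\T,\abs{z-1}^{\alpha q})$, and density of the intersection in $\hardy^p(\T,\abs{z-1}^{\alpha p})$ upgrades the representation from the dense subspace to all of $\hardy^p$. Finally the norm bound $\norm{\mathbf{S}_{\mu_\alpha}h}_{\hardy^q(\T,\abs{z-1}^{\alpha q})} \lesssim \norm{\G}$ follows from the operator bound in Proposition~\ref{ChangeScaleAndMeasure} plus $\norm{h}_{L^q(\T,\abs{z-1}^{\alpha q})} \simeq \norm{\G}$.

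The main obstacle is purely bookkeeping: tracking which weight sits in the measure versus in the pairing, and verifying that the representing function obtained from abstract $L^p$-duality (with respect to $\abs{z-1}^{\alpha p}\,d\theta$) really does yield an element of $\hardy^q(\T,\abs{z-1}^{\alpha q})$ under the $\mu_\alpha$-pairing after applying $\mathbf{S}_{\mu_\alpha}$ — i.e. making the exponent arithmetic $\alpha p - 2\alpha$, $\alpha q$, etc., line up. There is no genuine analytic difficulty beyond what is already contained in Proposition~\ref{ChangeScaleAndMeasure}; the density of $L^p \cap \hardy^2$ in $\hardy^p$ (for these $A_p$-type weights, bounded functions are dense) and the self-adjointness of $\mathbf{S}_{\mu_\alpha}$ are the only structural inputs, and both are already used in Theorem~\ref{FixedWeightDualHardy}.
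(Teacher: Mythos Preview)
Your proposal is correct and follows exactly the route the paper indicates: the paper's ``proof'' of Theorem~\ref{Main3} is the single sentence ``Following the same argument in the proof of Theorem~\ref{FixedWeightDualHardy},'' with Proposition~\ref{ChangeScaleAndMeasure} in place of Theorem~\ref{Main2}, which is precisely what you do. You have in fact supplied the weight-exponent bookkeeping (the check that $\tilde h \in L^q(\T,\abs{z-1}^{\alpha p})$ translates, via $q(p-1)=p$, to $h \in L^q(\T,\abs{z-1}^{\alpha q})$, and the H\"older splitting $\abs{z-1}^{2\alpha}=\abs{z-1}^{\alpha}\cdot\abs{z-1}^{\alpha}$) that the paper leaves entirely to the reader.
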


At first, the two duality results in Theorem \ref{FixedWeightDualHardy} and Theorem \ref{Main3} may seem confusing for 
$\displaystyle{p \in \left(\frac{2\alpha + 1}{\alpha + 1}, \frac{2\alpha + 1}{\alpha}   \right)}$.  However, the main point is to note the difference in the exponents of the weights and the way the pairing is defined.  We illustrate these two results in the following example.

\begin{example}
Let us take $\alpha = 1/2$.  Then $\displaystyle{{\bf{S}}_{\abs{z - 1}}}$ is bounded on $\displaystyle{L^p\left( \T, \abs{z - 1}  \right)}$ for $\displaystyle{p \in (4/3, 4)}$.  In particular, for any $\displaystyle{p \in (4/3, 4)}$, the dual space of $\displaystyle{\hardy^p\left( \T, \abs{z - 1}  \right)}$ can be identified by $\displaystyle{\hardy^q\left( \T, \abs{z - 1}  \right)}$ where $\displaystyle{\frac{1}{p} + \frac{1}{q} = 1}$, under the pairing
\begin{displaymath}
\left< f, h \right>_{\abs{z - 1}} = \int_{\T} f(z)~ \overline{h(z)}~ \abs{z - 1} ~ d\theta.
\end{displaymath}

On the other hand, using the second duality result for any $p > 1$, the dual space of $\displaystyle{\hardy^p\left( \T, \abs{z - 1}  \right)}$ can be identified by $\displaystyle{\hardy^q\left( \T, \abs{z - 1}^{q/p} \right)}$ when $\displaystyle{\frac{1}{p} + \frac{1}{q} = 1}$, under the pairing 
\begin{displaymath}
\left< f, h \right>_{\abs{z - 1}^{2/p}} = \int_{\T} f(z)~ \overline{h(z)}~ \abs{z - 1}^{2/p} ~ d\theta.
\end{displaymath}
\end{example}

\section{Acknowledgment} 

We thank the anonymous referee for constructive comments on the proofs of Proposition 3 and Theorem 5 and also for the useful editorial remarks on the exposition of the article. This study started when the first author visited the second author at Texas A\&M University, we thank the Department of Mathematics for the support.


\end{document}